\newtheorem{tw}{Theorem}[section]
\newtheorem{lem}[tw]{Lemma}
\newtheorem{wn}[tw]{Corollary}
\newtheorem{fakt}[tw]{Fact}
\newtheorem{df}[tw]{Definition}
\newcommand{\dx}{\mathrm{d}x}
\newcommand{\e}{\varepsilon}
\newcommand{\f}{\mu}
\newcommand{\lm}{\lambda}
\newcommand{\D}{\Delta}
\newcommand{\Rz}{\mathbb{R}}
\numberwithin{equation}{section}
\begin{document}
\title{Perturbations of isometries between Banach spaces}
\author{Rafa\l ~G\'orak}
\address{Rafa\l ~G\'orak\\ Technical University of Warsaw \\
Pl. Politechniki 1\\  00-661 Warszawa \\ Poland}
\date{\today}
\subjclass[2010]{46E40, 46B20} \keywords{Mazur-Ulam theorem,
Banach-Stone theorem, function space, isometry}

\maketitle

\begin{abstract}
We prove a very general theorem concerning the estimation of the
expression \mbox{$\|T(\frac{a+b}{2}) - \frac{Ta+Tb}{2}\|$} for
different kinds of maps $T$ satisfying some general perurbated
isometry condition. It can be seen as a quantitative
generalization of the classical Mazur-Ulam theorem. The estimates
improve the existing ones for bi-Lipschitz maps. As a consequence
we also obtain a very simple proof of the result of Gevirtz which
answers the Hyers-Ulam problem and we prove a non-linear
generalization of the Banach-Stone theorem which improves the
results of Jarosz and more recent results of Dutrieux and Kalton.
\end{abstract}
\section{Introduction}

The aim of this paper is to prove a very general theorem (Theorem
\ref{tw coarse mazur ulam}) that will allow us to obtain several
facts concerning approximate preservation of midpoints by
different kinds of maps with perturbated isometry condition. Let
us define the main notion of this paper:
\begin{df}
Let $T :E \mapsto F$ be a function between two metric spaces
$(E,d_E)$ and $(F,d_F)$. Assume that there is a function
\mbox{$\f: \Rz_+ \mapsto \Rz_+$} (where $\Rz_+ = \{x \in \Rz; \; x
\geq 0\}$) which is non-decreasing and such that the following
conditions hold:
\begin{itemize}
\item[(i)] $T$ is a bijection. \item[(ii)] $d_F(Tx,Ty) \leq
\f(d_E(x,y))$ and $d_E(T^{-1}f,T^{-1}g) \leq \f(d_F(f,g))$ for all
$x,y \in E$ and $f,g \in F$.
\end{itemize}
Then $T$ is called a $\f$-isometry.
\end{df}
\noindent In our article we consider (except Corollary \ref{wn o
sieciach}) \mbox{$\f$-isometries} between Banach spaces only. It
should be noticed that following \cite{LinSzan} for a given map
\mbox{$T:E \mapsto F$} we can easily find the optimal $\f$ which
is $\f(t)=t+\e_T(t)$ where

$$ \e_T(t)=\sup \{\big{|}\|Tx-Ty\|-\|x-y\|\big{|}\;: \|x-y\| \leq t \textrm{ or
} \|Tx-Ty\| \leq t \}.$$ Lindenstrauss and Szankowski consider
maps $T$ that are surjective but not necessarily injective as
$\f$-isometries. However they observed that one can easily reduce
the considerations to the bijective case when $t \rightarrow
\infty$:

\begin{fakt}\label{surjekcja a bijekcja}
Let $T:E \mapsto F$ be a surjective map between Banach spaces $E$
and $F$, respectively. If $\e_T :\Rz_+ \mapsto \Rz_+$ is well
defined ($\forall t \in \Rz_+ \; \e_T(t)< \infty$) and $\exists
\delta_0>0$ $\frac{\e_T(\delta_0)}{\delta_0}<1$ then there exists
a bijection $\widetilde{T}:E \mapsto F$ such that:

$$\forall x \in E \; \|Tx-\widetilde{T}x\| \leq
2\delta_0+2\e_T(\delta_0).$$ Hence  $\widetilde{T}$ is a
$\f$-isometry for $\f(t)=t+ \e_T(t)+4\delta_0+4\e_T(\delta_0)$. In
particular $\e_{\widetilde{T}}(t)\sim \e_T(t)$ as $t \rightarrow
\infty$ (if only $\e_T(t) \rightarrow \infty$).
\end{fakt}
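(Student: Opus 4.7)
The first observation is that the hypothesis $\e_T(\delta_0) < \delta_0$ forces the fibres of $T$ to be small: if $Tx = Tx'$ then $\|Tx - Tx'\| = 0 \leq \delta_0$, so by the very definition of $\e_T$ we get $\|x - x'\| \leq \e_T(\delta_0) < \delta_0$. Hence every fibre $T^{-1}(\{y\})$ has diameter at most $\e_T(\delta_0)$, and $T$ is, in a quantitative sense, almost injective at scale $\delta_0$.

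Using the axiom of choice, pick a selector $s \colon F \to E$ with $T \circ s = \id_F$ and put $A = s(F) \subseteq E$. Then $T|_A \colon A \to F$ is a bijection, and $A$ is $\e_T(\delta_0)$-dense in $E$: for any $x$ the point $s(Tx)$ lies in $A$ and satisfies $\|s(Tx) - x\| \leq \e_T(\delta_0)$. The plan is to produce a bijection $\varphi \colon E \to A$ which is pointwise close to the identity, and then to set $\widetilde T = T|_A \circ \varphi$. This is automatically a bijection $E \to F$, and the $\e_T$-inequality will translate any pointwise bound on $\|\varphi(x) - x\|$ into one on $\|\widetilde T(x) - Tx\| = \|T\varphi(x) - Tx\|$.

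The main obstacle is the construction of $\varphi$. The obvious retraction $x \mapsto s(Tx)$ is $\e_T(\delta_0)$-close to the identity but is not injective---it collapses each fibre to a point---so to obtain a bijection one must redistribute the surplus elements of each over-sized fibre onto previously unused, nearby points of $A$. My approach would be a Cantor--Bernstein/Hall-type matching argument on a well-ordering of $E$: at each stage assign to the current $x$ a still-free element of $A$ within distance at most $\delta_0 + \e_T(\delta_0)$, the existence of a free candidate following from the density of $A$ together with the linear structure of $E$, which provides enough room to absorb the extra fibre elements into neighbouring sites of $A$. The careful verification that such a matching can be carried out with the required metric budget is where I expect the real work to lie.

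Granting such a $\varphi$ with $\|\varphi(x) - x\| \leq \delta_0 + \e_T(\delta_0)$, the estimate follows from the definition of $\e_T$ and its monotonicity:
$$\|\widetilde T(x) - Tx\| \;=\; \|T\varphi(x) - Tx\| \;\leq\; \|\varphi(x) - x\| + \e_T(\|\varphi(x) - x\|) \;\leq\; 2\delta_0 + 2\e_T(\delta_0),$$
the last step using $\e_T(\delta_0) < \delta_0$. The concluding claim that $\widetilde T$ is a $\f$-isometry with $\f(t) = t + \e_T(t) + 4\delta_0 + 4\e_T(\delta_0)$ is then a direct double triangle-inequality consequence of this pointwise bound applied at both endpoints, together with the original $\e_T$-inequality for $T$ itself.
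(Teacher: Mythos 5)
Your overall architecture is reasonable and close in spirit to the paper's: both reduce the Fact to producing a bijection that moves every point by a controlled amount, and your fibre-diameter observation and the selector $A=s(F)$ are correct. But the proposal has a genuine gap exactly where you flag it: the existence of the bijection $\varphi\colon E\to A$ with $\|\varphi(x)-x\|\le\delta_0+\e_T(\delta_0)$ \emph{is} the content of the Fact, and a greedy transfinite assignment ``to a still-free nearby element of $A$'' does not work as stated. Density of $A$ is not enough; a matching close to the identity exists only if $A$ meets every ball of fixed radius in a set of the same cardinality as $E$, and for your $A$ (one point per fibre) this needs an argument --- e.g.\ that $T^{-1}(B_F(Tx,t))\subseteq B_E(x,t+\e_T(t))$, so a ball in $E$ meets at least $|B_F(Tx,t)|=|F|=|E|$ distinct fibres, where $|E|=|F|$ in turn requires comparing density characters (this is why the paper records that the density characters of $E$ and $F$ coincide). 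Without this Hall-type cardinality bookkeeping the greedy process can exhaust all free candidates near some point. The paper sidesteps the matching problem: it takes $A$ to be a \emph{maximal $\delta_0$-separated} set (so $T|A$ is injective by your fibre observation and $T(A)$ is $(\delta_0+\e_T(\delta_0))$-dense), partitions $E=\dot{\bigcup}_a E_a$ and $F=\dot{\bigcup}_a F_a$ into cells of controlled diameter with $|E_a|=|F_a|$, and then any family of bijections $E_a\to F_a$ sending $a$ to $Ta$ works --- the set-theoretic step becomes genuinely routine once the cells are equinumerous.

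There is also a quantitative slip at the end. You bound $\|T\varphi(x)-Tx\|\le\|\varphi(x)-x\|+\e_T(\|\varphi(x)-x\|)$, but $\|\varphi(x)-x\|$ may be as large as $\delta_0+\e_T(\delta_0)>\delta_0$, and monotonicity only yields $\e_T(2\delta_0)$, which is not controlled by the hypothesis $\e_T(\delta_0)<\delta_0$ (the function $\e_T$ may jump between $\delta_0$ and $2\delta_0$). The estimate is salvageable by chaining through the midpoint $z=\frac{x+\varphi(x)}{2}$, which gives $\|T\varphi(x)-Tx\|\le\|\varphi(x)-x\|+2\e_T(\delta_0)\le\delta_0+3\e_T(\delta_0)\le 2\delta_0+2\e_T(\delta_0)$; the paper instead measures the displacement on the $F$ side, where $Ta$ and $\widetilde{T}x$ lie in the same cell $F_a$ of diameter at most $\delta_0+\e_T(\delta_0)$ and $\|Tx-Ta\|\le\delta_0+\e_T(\delta_0)$ holds directly, so no value of $\e_T$ beyond $\delta_0$ is ever invoked.
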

\begin{proof}
For the sake of completeness we sketch the proof. Let us consider
the maximal set $A \subset E$ such that all the points are in the
distance at least $\delta_0$ from each other. Then for every $a
\neq b$, $a,b \in A$ we have $\delta_0-\e_T(\delta_0) \leq
\|Ta-Tb\|$, hence $T|A$ is injective. Moreover $T(A)$ is a
$\delta_0+\e_T(\delta_0)$ dense in $F$ (that is the distance of
every element of $F$ from $T(A)$ is not greater than
$\delta_0+\e_T(\delta_0)$). This shows that the density character
of $E$ and $F$ are equal. Now it is easy to construct a
decomposition of $E=\dot{\bigcup}_{a \in A} E_a$ and
$F=\dot{\bigcup}_{a \in A} F_a$ such that for all $a \in A$:
\begin{itemize}
\item[(1)] $a \in E_a$, $Ta \in F_a$;

\item[(2)] $|E_a|=|F_a|$;

\item[(3)] $\textrm{diam} E_a \leq \delta_0$ and $\textrm{diam}
F_a \leq \delta_0 + \e_T(\delta_0)$.
\end{itemize}
By the standard set theoretical reasoning we can extend $T|A$ to
the required $\f$-isometry $\widetilde{T}:E \mapsto F$.
\end{proof}
\noindent Hence in further considerations we stick to the notion
of $\f$-isometry as it provides sufficient generality and by
considering bijective maps we avoid some easy but rather technical
problems.

 When considering the $\f$-isometry $T$ one should rather
think that $T$ is not necessarily the perturbated isometry (since
it may easily happen that there is no isometry to be perturbated)
but $T$ satisfies the perturbated isometry condition. Hence the
following natural question arises: "How can you perturbate the
definition of an isometry between Banach spaces so that its
existence implies the existence of an isometry?". If the answer to
the above question is positive then another one can be asked: "How
far is the perturbated isometry from an isometry?" It appears that
Lindestrauss and Szankowski in \cite{LinSzan} answered these
questions for the class of all Banach spaces and for all
$\f$-isometries. However one can investigate the above problems
for some subclasses of Banach spaces (such as function spaces
which leads to generalizations of the Banach-Stone theorem).

Let us discuss now, in more details, some examples of
$\f$-isometries for different functions $\f$ and the results
related to both questions asked above. Let $T$ be a $\f$-isometry
between Banach spaces $E$ and $F$. If $\f(t)=t$ then $T$ is just
an isometry. Let us consider now $\f(t)=t+L$ for some constant $L
\geq 0$. Such maps are called $L$-isometries. More generally
$L$-isometry $T$ is a surjective map between Banach spaces for
which $\e_T(t) \leq L$. But as we have already noticed, Fact
\ref{surjekcja a bijekcja} allows us to reduce considerations to
the bijective case (see
 Corollary \ref{Hyers-Ulam} where we show how it is done). Hyers and Ulam asked whether
$L$-isometries are close to isometries. The question was answered
positively for all pairs of Banach spaces $E$ and $F$ by Gevirtz
in \cite{Gev1} (let us say that $L$ can be as large as we please).

Szankowski and Lindenstrauss gave a complete characterization of
such $\f$-isometries whose existence implies the existence of an
isometry. More precisely:
\begin{tw}\label{LinSzan}
Let $T :E \mapsto F$ be a $\f$-isometry between Banach spaces $E$
and $F$ where $\f(t)=t+\e_T(t)$, $T(0)=0$ such that the condition
\mbox{$\int^{\infty}_1 \frac{\e_T(t)}{t^2} dt < \infty$} is
satisfied. Then there exists an isometry $I:E \mapsto F$ such that

$$\|Tx-Ix\| = o(\|x\|) \textrm{ as } \|x\| \rightarrow \infty.$$
\noindent Moreover the result is sharp (see \cite{LinSzan} for
more details) in the case when $E$ and $F$ are general Banach
spaces.
\end{tw}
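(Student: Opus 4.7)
The plan is to use the quantitative Mazur-Ulam estimate (Theorem \ref{tw coarse mazur ulam}) as the main technical tool. That theorem is designed to bound $\|T(\tfrac{a+b}{2}) - \tfrac{Ta+Tb}{2}\|$ by an explicit function of $\e_T$ and $\|a-b\|$; specializing to $a=0$, $b=2y$ and using $T(0)=0$ turns this into a single-point ``affinity defect''
$$\|T(2y) - 2T(y)\| \leq \eta(\|y\|),$$
where $\eta(t) = C\e_T(Ct)$ for some absolute constant $C$.

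Next I would introduce the rescalings $I_n(x) := 2^{-n}T(2^n x)$ (so that $I_0 = T$) and telescope:
$$\|I_{n+1}(x)-I_n(x)\| = \frac{\|T(2^{n+1}x) - 2T(2^n x)\|}{2^{n+1}} \leq \frac{\eta(2^n \|x\|)}{2^{n+1}}.$$
The standard dyadic-to-integral comparison applied to the monotone function $\e_T$ yields
$$\sum_{n \geq 0} \frac{\e_T(2^n u)}{2^n} \leq 4u \int_{u}^\infty \frac{\e_T(t)}{t^2}\,\mathrm{d}t,$$
so the hypothesis makes $\sum_n (I_{n+1}-I_n)(x)$ absolutely convergent, and $I(x):=\lim_n I_n(x)$ is well defined pointwise.

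To see that $I$ is an isometry, I combine the two defining inequalities of a $\f$-isometry (applied to $T$ and to $T^{-1}$) to get
$$\bigl|\,\|I_n(x)-I_n(y)\| - \|x-y\|\,\bigr| \leq 2^{-n}\e_T(2^n\|x-y\|).$$
The integrability hypothesis forces $\e_T(t)/t \to 0$ (otherwise $\e_T(t) \geq ct$ on a dyadic sequence of intervals, which would give infinitely many disjoint pieces of the integral each of size at least $(c/2)\log 2$), so the right hand side tends to $0$ and we obtain $\|I(x)-I(y)\|=\|x-y\|$; surjectivity of $I$ follows from the analogous construction applied to $T^{-1}$. Writing $Tx - Ix = \sum_{n\geq 0}(I_n-I_{n+1})(x)$ and using the comparison above, one finally gets
$$\frac{\|Tx - Ix\|}{\|x\|} \leq C'\int_{C''\|x\|}^\infty \frac{\e_T(t)}{t^2}\,\mathrm{d}t \to 0 \text{ as } \|x\|\to\infty,$$
which is the asserted $o(\|x\|)$ bound.

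The main difficulty I expect is the very first step: extracting from Theorem \ref{tw coarse mazur ulam} a midpoint estimate of the precise form displayed above, in which the dependence on $\|a-b\|$ passes through a \emph{single} evaluation of $\e_T$ rather than through an iterated or nested expression. Only with such a clean dependence does the dyadic-to-integral comparison reproduce the sharp integrability condition $\int_1^\infty \e_T(t)/t^2\,\mathrm{d}t<\infty$; the rest of the argument is then essentially bookkeeping. The sharpness clause of the statement is not addressed by this construction and is cited from \cite{LinSzan}, where it is obtained from an explicit counterexample.
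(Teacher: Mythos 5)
The first thing to note is that the paper does not prove Theorem \ref{LinSzan} at all: it is quoted from Lindenstrauss and Szankowski \cite{LinSzan}, and the author explicitly states (in Sections 2 and 4) that Theorem \ref{tw coarse mazur ulam} does \emph{not} recover it in full generality, posing that derivation as an open problem and claiming it only for special perturbations such as $\e_T(t)=t^{\alpha}$, $\alpha\in[0,1)$. Your proposal is therefore an attempt at exactly the derivation the paper declines to claim, and the obstruction is the one you flag yourself as ``the main difficulty.''

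That difficulty is a genuine gap, not a technicality. Theorem \ref{tw coarse mazur ulam} bounds the midpoint defect by the iterated composition $\f^{\circ(2^{n+1}-1)}(\frac{\|a-b\|}{2^{n+1}})$, and after optimizing over $n$ (via Lemma \ref{lemma_on composition and integral}) this does not collapse to a single evaluation $C\e_T(C\|a-b\|)$. Already in the simplest case $\e_T\equiv L$ (the Hyers--Ulam setting, Corollary \ref{Hyers-Ulam}) the best the theorem yields is $A\sqrt{\|a-b\|}+B$, which is unbounded in $\|a-b\|$, whereas your claimed $\eta(t)=C\e_T(Ct)$ would be the constant $CL$. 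So the inequality $\|T(2y)-2T(y)\|\leq C\e_T(C\|y\|)$ is not a consequence of Theorem \ref{tw coarse mazur ulam}, and without it your dyadic-to-integral comparison $\sum_{n}\e_T(2^{n}u)/2^{n}\leq 4u\int_{u}^{\infty}\e_T(t)t^{-2}\,\dx$ has nothing to bite on. What would actually be needed is that the optimized iterated bound $\eta(t)$ satisfies $\sum_{n}\eta(2^{n}t)/2^{n}<\infty$ under the hypothesis $\int_{1}^{\infty}\e_T(t)t^{-2}\,dt<\infty$; this does hold in the constant case (since $\sqrt{t}$ is dyadically summable) and for $\e_T(t)=t^{\alpha}$ (where $\eta(t)=O(t^{1/(2-\alpha)})$, as computed in Section 4), but establishing it for an arbitrary integrable $\e_T$ is precisely the open question. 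The remaining steps of your argument --- the telescoping definition of $I$, the observation that integrability forces $\e_T(t)/t\rightarrow 0$ and hence that the limit is an isometry --- are sound bookkeeping, but they all rest on the missing estimate. The actual proof in \cite{LinSzan} proceeds by a different and considerably more delicate midpoint analysis (the paper itself remarks that this is the most demanding part of that article).
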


Let us consider now $\f(t)=Mt$. In this case $T$ is a bi-Lipschitz
map (or Lipschitz equivalence). It means that distances between
points are perturbated  according to the inequalities
$$\frac{1}{M}{\|x-y\|} \leq \|Tx-Ty\| \leq M{\|x-y\|}
\textrm{ for all } x,y \in E.$$ Obviously if $M = 1$ then $T$ is
just an isometry. Let us look at
 the case when $M \searrow 1$. Unfortunately, no matter how close to one $M$
is, we cannot guarantee the existence of an isometry between
general Banach spaces $E$ and $F$. Clearly $\int^{\infty}_1
\frac{(M-1)t}{t^2} dt = \infty$ ($M >1$) hence you can find in
\cite{LinSzan} a construction of Banach spaces $E$ and $F$ that
are $\f$-isometric for $\f(t)=Mt$ but they are not isometric.
However, for some particular class of Banach spaces $E$ and $F$
one can obtain some interesting positive results even for more
general case that is when $\f(t)=Mt+L$ (maps that are bi-Lipschitz
for large distances). Indeed let us consider $E=C_0(X)$ and
$F=C_0(Y)$, the spaces of continuous real valued functions
vanishing at $\infty$ on locally compact spaces $X$ and $Y$,
respectively. Spaces $C_0(X)$ and $C_0(Y)$ are endowed with the
$\sup$ norms. It appears that in this case one can obtain more
than Theorem \ref{LinSzan}:

\begin{tw}\label{Banach-Stone ogolnie}
Let $T :C_0(X) \mapsto C_0(Y)$ be a $\f$-isometry, where $X$ and
$Y$ are locally compact spaces, $\f(t)=Mt+L$ ($M \geq 1$, $L \geq
0$) and $T(0)=0$. Then there exists an absolute constant $M_0>1$
and functions $\delta : [1,\infty) \mapsto \Rz_+$, $\D : \Rz^2_+
\mapsto \Rz_+$ such that whenever $M<M_0$ then there exists an
isometry $I :C_0(X) \mapsto C_0(Y)$ such that
\begin{equation}\label{odleglosc od izometrii dla przestrzeni funkcyjnych}
 \|Tf-If\| \leq \delta(M) \|f\| + \D(M,L) \textrm{ for all } f \in
C_0(X).
\end{equation}
Moreover, $\D(M,0)=0$ and $\lim_{M \rightarrow 1^+}\delta(M)=0$.
In particular, from the Banach-Stone theorem, the spaces $X$ and
$Y$ are homeomorphic. It is known that $M_0 \leq \sqrt{2}$ and the
equality holds if we assume additionally that $T$ is linear (see
\cite{Cam} and \cite{Gor} for the discussion).
\end{tw}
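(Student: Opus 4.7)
The plan is to combine the quantitative Mazur--Ulam-type estimate of Theorem~\ref{tw coarse mazur ulam} with the specific peaking/lattice structure of sup-norm function spaces. First I would apply Theorem~\ref{tw coarse mazur ulam} to the $\f$-isometry $T$ with $\f(t)=Mt+L$ to obtain an approximate midpoint estimate of the form
\begin{equation*}
\left\| T\!\left(\tfrac{f+g}{2}\right) - \tfrac{1}{2}(Tf+Tg)\right\| \;\le\; \alpha(M)\,\|f-g\| + \beta(M,L),
\end{equation*}
with $\alpha(M)\to 0$ as $M\to 1^+$ and $\beta(M,0)=0$. Iterating this at dyadic rationals and using $T(0)=0$ upgrades the bound to an approximate additivity/homogeneity statement
\begin{equation*}
\|T(\lambda f + (1-\lambda)g) - \lambda Tf - (1-\lambda) Tg\| \;\le\; \widetilde{\alpha}(M)\max(\|f\|,\|g\|) + \widetilde{\beta}(M,L)
\end{equation*}
valid for $\lambda\in[0,1]$ and arbitrary $f,g$.

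Next I would exploit the $C_0$ geometry. Two unit vectors $f,g\in C_0(X)$ have essentially disjoint supports if and only if $\|f+g\|=\|f-g\|=1$, a relation preserved by $T$ up to an error controlled by $M-1$ and $L$. Using this, for every $y\in Y$ one can examine the $T^{-1}$-images of functions in $C_0(Y)$ that peak at $y$ and show that they all peak near a single point $\tau(y)\in X$, producing a map $\tau:Y\to X$. The threshold $M<M_0$ enters precisely here: if $M$ is too large, the images can peak at genuinely different points and $\tau$ ceases to be well defined. A continuity/maximality argument of the type used by Jarosz and refined by Dutrieux and Kalton then upgrades $\tau$ to a homeomorphism and yields a continuous sign function $h:Y\to\{-1,+1\}$.

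Once $\tau$ and $h$ are in place, the natural candidate isometry is $I:C_0(X)\to C_0(Y)$ given by $(If)(y)=h(y)f(\tau(y))$, and the homeomorphism conclusion then follows by Banach--Stone. To prove the quantitative bound~(\ref{odleglosc od izometrii dla przestrzeni funkcyjnych}), I would approximate an arbitrary $f\in C_0(X)$ by a finite sum of bumps coming from a partition of unity subordinated to $\tau$, transport the estimate from these bumps to $f$ via the approximate affineness above, and collect $\delta(M)$ and $\D(M,L)$ from $\widetilde{\alpha}(M)$ and $\widetilde{\beta}(M,L)$. The relation $\D(M,0)=0$ is then forced by scale-invariance when $L=0$ (both sides of the $\f$-isometry condition rescale homogeneously), while $\delta(M)\to 0$ as $M\to 1^+$ follows from $\widetilde{\alpha}(M)\to 0$.

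The main obstacle is the middle step: turning approximate preservation of the disjoint-support relation into a \emph{single-valued, continuous} point correspondence $\tau$ on the whole of the (possibly non-compact) locally compact $Y$. Identifying the admissible threshold $M_0>1$ and controlling the additive $L$-perturbation uniformly across $Y$---particularly near infinity, where normalizing peaking functions is awkward---is the most delicate part of the argument.
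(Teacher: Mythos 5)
Your overall architecture is the same as the paper's: a quantitative midpoint estimate from Theorem~\ref{tw coarse mazur ulam} (which the paper packages as Corollary~\ref{oszacowanie dla zgrubnie lip}), then peaking functions to build a point correspondence and a sign function, then the composition operator $If(y)=h(y)f(\tau(y))$ and a transfer of the estimate. But the proposal has a genuine gap exactly where you yourself flag ``the main obstacle'': the construction of a single-valued point map on a \emph{non-compact} locally compact space, with quantitative control of the form~(\ref{odleglosc od izometrii dla przestrzeni funkcyjnych}). You defer this to ``a continuity/maximality argument of the type used by Jarosz and refined by Dutrieux and Kalton,'' but that citation cannot close the gap: Jarosz's argument \cite{Jarosz} works only for $L=0$ and yields $\delta(M)=O((M-1)^{0.1})$, while Dutrieux and Kalton \cite{DutKal} produce no estimate of the form~(\ref{odleglosc od izometrii dla przestrzeni funkcyjnych}) at all. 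Since the content of the theorem (beyond mere existence of $M_0$) is precisely the linear-in-$(M-1)$ bound and its validity for $L>0$ on locally compact $X,Y$, the step you skip is the theorem.

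Concretely, the difficulty is that a net of points $y_\sigma$ at which the transported peak functions are large may escape to infinity in $Y$, so the candidate value of the point map need not exist. The paper's resolution (Fact~\ref{locally compact fact}) is to average a peak net $f^m_\sigma$ with one \emph{fixed} member $f$ of the net, apply the midpoint estimate of Corollary~\ref{oszacowanie dla zgrubnie lip} to $g^m_\sigma=\frac{f^m_\sigma+f}{2}$, and deduce that $Tf(y^m_\sigma)\geq(14-13M)m$ for all $\sigma$; since $Tf\in C_0(Y)$, the superlevel set $\{y:\ Tf(y)\geq(14-13M)m\}$ is compact and traps the net, forcing a cluster point. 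This single use of the midpoint estimate on a well-chosen pair is what replaces compactness of $Y$, and it is also where the threshold $D=14-13M$ (hence $M_0=\sqrt{16/15}$ in Theorem~\ref{tw coarse banach stone}) comes from. Your proposed ``approximate affineness at dyadic rationals'' is, by contrast, both unproved (iterating the midpoint bound accumulates error and needs its own argument) and unnecessary: the paper never uses more than one application of the midpoint estimate per peak net. Finally, note the paper constructs the map in the direction $\varphi:X\mapsto Y$ via the sets $S^D_m(x)$, not via disjoint-support preservation; your $\|f+g\|=\|f-g\|=1$ criterion characterizes disjointness of peak sets only approximately and would still require the same escape-to-infinity analysis.
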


The first of such results was obtained by Jarosz in \cite{Jarosz}
but for $L=0$ only. However the value of $M_0$ which he obtains is
very close to 1 as well as the function $\delta$ is far from being
optimal ($\delta(M)=O((M-1)^{0.1})$ as $M \searrow 1$ and $\D(M,0)
= 0$ in his result). Later Dutrieux and Kalton in \cite{DutKal}
obtained the value of $M_0=\sqrt{\frac{17}{16}}$ (in their
language the condition $M < M_0$ can be seen as the inequality
$d_N(C_0(X),C_0(Y))< M^2_0$) but they do not provide any
estimation like (\ref{odleglosc od izometrii dla przestrzeni
funkcyjnych}) (this time $L$ can be positive). Finally the author
in \cite{Gor} improved the constant to $M_0=\sqrt{\frac{6}{5}}$
and showed that $\delta(M)=26(M-1)$. Moreover $\D(M,0)=0$ hence
the result improved both, the constant $M_0$ obtained in
\cite{DutKal} and the function $\delta$ obtained in \cite{Jarosz}
as well as showed the existence of $\delta$ and $\Delta$ if $L>0$.
However, the proof works only for $X$ and $Y$ {\bf compact} and it
is not that easy to extend it to the locally compact case. We will
do this in the last section of this paper by applying the main
result of Section 2.

It appears that in the proofs of most of the above results the
estimation of \mbox{$\|T(\frac{a+b}{2}) - \frac{Ta+Tb}{2}\|$} is
crucial and far from being obvious. Moreover the results
estimating this expression can be regarded as generalizations of
the Banach-Mazur theorem so in some sense they are of independent
interest. We deal with this problem in the next section.

\section{Approximate Preservation of midpoints by $\f$-isometries}
We present here a very general method of estimating
\mbox{$\|T(\frac{a+b}{2}) - \frac{Ta+Tb}{2}\|$} for
$\f$-isometries $T$. It should be mentioned that some results of
this kind are already obtained in \cite{LinSzan} (in fact this is
the most demanding part of the article) . However the method
presented here has several important advantages. First of all it
has astonishingly simple proof and it covers the result of Gevirtz
(Corollary \ref{Hyers-Ulam}) which answers the famous Hyers-Ulam
problem (the proofs in the original paper \cite{Gev1} or in the
survey paper of Rassias \cite{Rassias} are clearly more
complicated). Secondly, applying our result for $\f$-isometries
where $\f(t)=Mt+L$, we obtain new and elegant estimates (they are
interesting even in the Lipschitz case that is when $L=0$). This
will allow us to prove new results concerning the nonlinear
version of the Banach-Stone theorem. Finally, although our theorem
does not cover the result of Lindenstrauss and Szankowski in full
generality, it gives their result for particular functions
$\f(t)=t+\e(t)$ such as $\f(t)=t+t^{\alpha}$ where $\alpha \in
[0,1)$ (see Section 4). It is very tempting (due to the simplicity
of the prove below) to investigate whether Theorem \ref{tw coarse
mazur ulam} gives us the result from \cite{LinSzan} in full
generality.

Before we formulate and prove the main result let us say that the
idea of it comes from a very beautiful proof of the classical
Mazur-Ulam theorem due to V\"{a}is\"{a}l\"{a} (see
\cite{Vaisala}).
\begin{tw}\label{tw coarse mazur ulam}
Let $T :E \mapsto F$ be a $\f$-isometry between two normed spaces
$(E,\|.\|_E)$ and $(F,\|.\|_F)$. Assume that $\f: \Rz_+ \mapsto
\Rz_+$ is such that $\f(t)/2 \leq \f(t/2)$. Then for all $a,b \in
E$ and $n \in \mathbb{Z}_+$:
$$\|T(\frac{a+b}{2})-\frac{Ta+Tb}{2}\|_F \leq \f^{\circ (2^{n+1}-1)}(\frac{\|a-b\|_E}{2^{n+1}})$$
where $\f^{ \circ n} = \f \circ \f \circ \ldots \f$ ($\f$ composed
$n$ times).
\end{tw}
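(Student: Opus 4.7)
The plan is to induct on $n$, proving the estimate simultaneously for every $\f$-isometry whose defining $\f$ satisfies the standing hypothesis $\f(t)/2 \leq \f(t/2)$; this freedom is needed because the inductive step will replace $\f$ by $\f\circ\f$. The device driving the induction is the V\"ais\"al\"a-style pair of central reflections $\psi : E \to E$, $\psi(x) = a+b-x$, and $\psi' : F \to F$, $\psi'(y) = Ta+Tb-y$, together with the conjugated bijection $g := \psi' \circ T \circ \psi$ and the self-map $h := T^{-1} \circ g$ of $E$.

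For the base case $n=0$, writing $z := (a+b)/2$ and $z' := (Ta+Tb)/2$, the identity
$$Tz - z' = \tfrac{1}{2}\bigl[(Tz-Ta) - (Tb-Tz)\bigr]$$
combined with the triangle inequality and monotonicity of $\f$ gives $\|Tz-z'\| \leq \tfrac{1}{2}\bigl(\f(\|z-a\|) + \f(\|z-b\|)\bigr) = \f(\|a-b\|/2)$, which is exactly $\f^{\circ(2^{1}-1)}(\|a-b\|/2^{1})$.

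For the inductive step, since $\psi$ and $\psi'$ are surjective linear isometries the map $g$ is again a $\f$-isometry, and a direct check gives $g(a)=Ta$, $g(b)=Tb$, and $g(z) = 2z' - Tz$. Consequently $h = T^{-1}\circ g : E \to E$ is a bijection that is a $(\f\circ\f)$-isometry and fixes both $a$ and $b$. The hypothesis on $\f$ propagates to every iterate: $\f^{\circ k}(t)/2 \leq \f^{\circ k}(t/2)$ for all $k$, by a short induction on $k$ using monotonicity. Applying the inductive hypothesis at level $n$ to the $(\f\circ\f)$-isometry $h$ with the pair $\{a,b\}$, and noting that $(h(a)+h(b))/2 = z$, yields
$$\|h(z) - z\| \leq (\f\circ\f)^{\circ(2^{n+1}-1)}\bigl(\|a-b\|/2^{n+1}\bigr) = \f^{\circ(2^{n+2}-2)}\bigl(\|a-b\|/2^{n+1}\bigr).$$

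To finish, I push this estimate forward by $T$. Since $T(h(z)) = g(z) = 2z' - Tz$, one has $\|T(h(z)) - Tz\| = 2\|Tz - z'\|$, and the $\f$-isometry bound for $T$ gives
$$2\|Tz - z'\| \leq \f\bigl(\|h(z)-z\|\bigr) \leq \f^{\circ(2^{n+2}-1)}\bigl(\|a-b\|/2^{n+1}\bigr);$$
a final application of the propagated inequality $\f^{\circ k}(t)/2 \leq \f^{\circ k}(t/2)$ moves the factor $\tfrac{1}{2}$ inside the argument and produces the bound $\f^{\circ(2^{n+2}-1)}(\|a-b\|/2^{n+2})$ required at level $n+1$. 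The only delicate points are the verification that $h$ is genuinely a $(\f\circ\f)$-isometry in the sense of the definition (a bijection satisfying both the forward and the inverse Lipschitz-type estimates with $\f\circ\f$), and that the hypothesis $\f(t)/2 \leq \f(t/2)$ is preserved under composition so the inductive hypothesis may legitimately be invoked for $h$; both facts are elementary but essential for the induction to close.
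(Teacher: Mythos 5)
Your proof is correct and is essentially the paper's own argument: the same V\"ais\"al\"a reflection device producing a $(\f\circ\f)$-isometry fixing $a$ and $b$, the same trivial triangle-inequality bound as the terminal estimate, and the same use of $\f(t)/2\le\f(t/2)$ to push the halving inside the iterates. The only difference is presentational --- you run an explicit induction on $n$ quantified over all admissible $\f$ and all $\f$-isometries, where the paper packages the same recursion through the supremum $\lambda(\f)$ and unrolls it.
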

\begin{proof}
Let us consider the set $W_E(\f)$ consisting of all maps $T$ that
are $\f$-isometries on $E$ and moreover, let $\textrm{Im}T$ be a
normed space. Fix $a$,$b$ in the space $E$ and set
$z=\frac{a+b}{2}$. Denote:
$$\lm(\f) = \sup \{\|Tz-\frac{Ta+Tb}{2}\|_F \textrm{ \;}|  \; T \in W_E(\f) \; , F=\textrm{Im}T\}.$$
Let us observe that for $T \in W_E(\f)$ we have:
\begin{eqnarray*}
\|Tz-\frac{Ta+Tb}{2}\|_F & \leq &
\frac{1}{2}(\|Tz-Ta\|_F+\|Tz-Tb\|_F) \leq \\ & \leq & \frac{1}{2}
(2\f(\frac{\|a-b\|_E}{2})) = \f(\frac{\|a-b\|_E}{2}).
\end{eqnarray*}
Hence
\begin{equation}\label{oszacowanie lambda}
\lm(\f) \leq \f(\frac{\|a-b\|_E}{2})
\end{equation}
and one can see that $\lm(\f)$ is finite. For some $T \in W_E(\f)$
let us define $\Psi$ and $\Psi'$ to be the reflections with
respect to $z$ and $\frac{Ta+Tb}{2}$, respectively. Consider a new
bijection on $E$ defined as a composition $S=\Psi T^{-1} \Psi' T$.
It is easy to check that $S \in W_E(\f \circ \f)$, $Sa=a$ and
$Sb=b$. We have:
\begin{eqnarray*}
2\|Tz-\frac{Ta+Tb}{2}\|_F &=& \|\Psi' Tz- Tz\|_F  \leq
\f(\|T^{-1}\Psi' Tz-T^{-1}Tz\|_E)\\ &=& \f(\|Sz-z\|_E) =
\f(\|Sz-\frac{Sa+Sb}{2}\|_E).
\end{eqnarray*}
Concluding
$$\lm(\f) \leq \frac{1}{2}\f(\lm(\f \circ \f)) \leq \f(\frac{\lm(\f^{ \circ 2})}{2}).$$
\noindent Hence:
$$
\lm(\f^{\circ 2^n}) \leq \f^{\circ 2^n}(\frac{\lm(\f^{\circ
2^{n+1}})}{2}).
$$
Applying the above formula recursively we obtain:

$$ \lm(\f)=\lm(\f^{\circ 1}) \leq \f^{\circ 1}(\frac{\lm(\f^{\circ
2})}{2}) \leq \f^{\circ 1}(\frac{1}{2} \f^{\circ
2}(\frac{\lm(\f^{\circ 4})}{2})) \leq \f^{\circ 1} \circ \f^{\circ
2}(\frac{\lm(\f^{\circ 4})}{4}) \leq \ldots $$ Finally:
$$\lm(\f) \leq \f^{\circ 1} \circ \f^{\circ 2} \ldots \f^{\circ 2^{n-1}}(\frac{\lm(\f^{\circ 2^n})}{2^n})=\f^{\circ (2^{n}-1)}(\frac{\lm(\f^{\circ 2^n})}{2^n}).$$
From the estimation (\ref{oszacowanie lambda}) we have

$$
\lm(\f) \leq \f^{\circ (2^{n+1}-1)}(\frac{\|a-b\|_E}{2^{n+1}}).
$$

\end{proof}

\section{Applications}
The result from the previous section gives us a very simple proof
of the main result from \cite{Gev1} as a consequence, which
answers the question of Hyers and Ulam. More precisely:
\begin{wn}\label{Hyers-Ulam}
Let $T$ be an $L$-isometry between Banach spaces $E$ and $F$ such
that $T(0)=0$. Then there exist constants $A$ and $B$, depending
on $L$ only, such that
$$\|T(\frac{a+b}{2})-\frac{Ta+Tb}{2}\| \leq A\sqrt{\|a-b\|}+B \textrm{ for all } a,b \in E.$$
As a corollary from that estimation, Gevirtz easily obtains
(relying on the result of Gruber) that the map $I:E \mapsto F$
defined as $Ix=\lim_{n \rightarrow \infty}\frac{T(2^nx)}{2^n}$ is
an isometry such that $\|Tx-Ix\| \leq 5L$ (later the constant was
improved to $2L$ which appears to be optimal).
\end{wn}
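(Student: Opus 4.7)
The plan is to apply Theorem \ref{tw coarse mazur ulam} with $\f(t)=t+L$ and then optimize the resulting estimate in $n$.

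First reduce to the bijective case. Since $T$ is a surjective $L$-isometry, $\e_T(t) \leq L$ for all $t$, so with $\delta_0=2L$ we have $\e_T(\delta_0)/\delta_0 \leq 1/2 < 1$, and Fact \ref{surjekcja a bijekcja} supplies a bijection $\widetilde{T}$ with $\|Tx-\widetilde{T}x\| \leq 6L$ which is a $\f$-isometry for $\f(t)=t+L'$ with $L'=13L$. Composing with a translation so that $\widetilde{T}(0)=0$ does not change $\f$, and any midpoint estimate for $\widetilde{T}$ transfers to $T$ at the cost of enlarging $A$ and $B$ by quantities depending only on $L$. I may therefore assume from the start that $T$ itself is a $\f$-isometry for $\f(t)=t+L$.

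Next verify the hypothesis of Theorem \ref{tw coarse mazur ulam}: $\f(t)/2=(t+L)/2 \leq t/2+L=\f(t/2)$. A trivial induction gives $\f^{\circ k}(t)=t+kL$ for every $k \geq 1$. The theorem then yields, for every $n \in \N$,
\begin{equation*}
\left\| T\!\left(\frac{a+b}{2}\right)-\frac{Ta+Tb}{2} \right\| \leq \frac{\|a-b\|}{2^{n+1}}+(2^{n+1}-1)L.
\end{equation*}

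Finally minimize the right-hand side in $n$. Writing $d=\|a-b\|$ and $k=2^{n+1}$, the real minimum of $d/k+(k-1)L$ is $2\sqrt{dL}-L$, attained at $k_{*}=\sqrt{d/L}$. When $d \geq L$ I pick $n$ so that $k_{*} \leq 2^{n+1} \leq 2k_{*}$, losing at most a factor of $2$ and producing a bound of the form $C_1\sqrt{L\|a-b\|}+C_2 L$; when $d<L$ the choice $n=0$ already gives $d/2+L \leq \sqrt{L\|a-b\|}+L$. Combining these two regimes yields the desired estimate $A\sqrt{\|a-b\|}+B$ with $A,B$ depending only on $L$. The only mild obstacle is the discreteness of $n$, which merely costs a universal multiplicative constant; the entire genuine content of the argument is Theorem \ref{tw coarse mazur ulam} itself.
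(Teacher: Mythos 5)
Your proposal is correct and follows essentially the same route as the paper: apply Theorem \ref{tw coarse mazur ulam} with $\f(t)=t+L$, note $\f^{\circ k}(t)=t+kL$, choose $2^{n+1}$ of order $\sqrt{\|a-b\|}$ (the paper) or $\sqrt{\|a-b\|/L}$ (you) to get the $A\sqrt{\|a-b\|}+B$ bound, and invoke Fact \ref{surjekcja a bijekcja} to pass between surjective $L$-isometries and bijective $\f$-isometries. Your write-up is in fact slightly more careful than the paper's about the reduction constants and the small $\|a-b\|$ regime, but these are details, not a different argument.
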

\begin{proof}
Let us first assume that $T$ is a $\f$-isometry for $\f(t)=t+L$.
Applying Theorem \ref{tw coarse mazur ulam} for $\f(t)=t+L$, we
obtain
$$\|T(\frac{a+b}{2})-\frac{Ta+Tb}{2}\| \leq
\frac{\|a-b\|}{2^{n+1}} + 2^{n+1}L.$$ Taking $n = \lfloor \log_2
\sqrt{\|a-b\|} \rfloor -1$ we have
$$ \|T(\frac{a+b}{2})-\frac{Ta+Tb}{2}\| =O(\sqrt{\|a-b\|})$$
as $\|a-b\| \rightarrow \infty$. By applying Fact \ref{surjekcja a
bijekcja}, we easily get the estimation for all $L$-isometries,
not only the bijective ones.
\end{proof}
For further applications of Theorem \ref{tw coarse mazur ulam} we
need the following simple observation:
\begin{lem}\label{lemma_on composition and integral}
Let $\f(t)=t+\e(t)$ where $\e :\Rz_+ \mapsto \Rz_+\setminus \{0\}$
is a non-decreasing function. Then $$\int^{\f^{\circ n}(t)}_{t}
\frac{1}{\e(x)} \dx \leq n.$$
\end{lem}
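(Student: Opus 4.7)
The plan is to telescope the integral along the orbit of $t$ under $\f$ and then apply monotonicity of $\e$ on each piece. First I would set $t_0=t$ and $t_{k+1}=\f(t_k)=t_k+\e(t_k)$, so that $\f^{\circ n}(t)=t_n$ and the sequence $t_0<t_1<\ldots<t_n$ is strictly increasing (here we use $\e>0$). This splits the integral as
$$\int^{\f^{\circ n}(t)}_{t} \frac{1}{\e(x)} \dx = \sum_{k=0}^{n-1}\int^{t_{k+1}}_{t_k} \frac{1}{\e(x)} \dx.$$

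Next I would estimate each summand. Since $\e$ is non-decreasing and $x\geq t_k$ on $[t_k,t_{k+1}]$, we have $\e(x)\geq \e(t_k)$, hence $1/\e(x)\leq 1/\e(t_k)$. Together with $t_{k+1}-t_k=\e(t_k)$ this yields
$$\int^{t_{k+1}}_{t_k} \frac{1}{\e(x)} \dx \leq \frac{t_{k+1}-t_k}{\e(t_k)}=1.$$
Summing over $k=0,\ldots,n-1$ gives the desired bound of $n$.

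There is really no serious obstacle here; the argument is a one-line monotonicity estimate once the orbit decomposition is written down. The only mild point to be careful about is the hypothesis that $\e$ takes values in $\Rz_+\setminus\{0\}$, which is what makes $1/\e(t_k)$ well defined and ensures the sequence $t_k$ is strictly increasing, so that the decomposition into $n$ non-degenerate intervals is legitimate.
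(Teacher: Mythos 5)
Your argument is correct and is essentially the same as the paper's: the paper also splits the integral over the intervals $[\f^{\circ k}(t),\f^{\circ k+1}(t)]$ and uses that $1/\e$ is non-increasing to bound each piece by $\frac{\f^{\circ (k+1)}(t)-\f^{\circ k}(t)}{\e(\f^{\circ k}(t))}=1$. No issues to report.
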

\begin{proof}
Let us notice that $\frac{1}{\e}$ is a non-increasing function,
hence
$$\int^{\f^{\circ n}(t)}_{t}
\frac{1}{\e(x)} \dx \leq \sum^{n-1}_{k=0} \frac{1}{\e(\f^{\circ
k}(t))} (\f^{\circ k+1}(t) - \f^{\circ k}(t))=n.$$
\end{proof}

We obtain the following:
\begin{wn}\label{oszacowanie dla zgrubnie lip}
Let $T : E \mapsto F$ be a $\f$-isometry for $\f(t)=(1+\e)t+L$
where $0<\e<0.2$.
 Then:
$$ \|T(\frac{a+b}{2})-\frac{Ta+Tb}{2}\| \leq 3\e\|a-b\| + \frac{4}{\e}L$$
for all $a,b \in E$.
\end{wn}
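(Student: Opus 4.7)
The plan is to apply Theorem \ref{tw coarse mazur ulam} with the modulus $\f(t)=(1+\e)t+L$ and then control the iterates $\f^{\circ n}$ via Lemma \ref{lemma_on composition and integral}. The hypothesis $\f(t)/2\leq\f(t/2)$ is immediate from $L/2\leq L$, so the theorem provides
$$\left\|T\!\left(\tfrac{a+b}{2}\right)-\tfrac{Ta+Tb}{2}\right\| \;\leq\; \f^{\circ(2^{k+1}-1)}\!\left(\tfrac{\|a-b\|}{2^{k+1}}\right)\quad(k\in\mathbb{Z}_+).$$

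Next I would convert the right-hand side into an explicit closed form. Writing $\f(t)=t+\eta(t)$ with $\eta(x)=\e x+L$, Lemma \ref{lemma_on composition and integral} yields (assuming $L>0$; the case $L=0$ follows by continuity)
$$\int_{t}^{\f^{\circ n}(t)}\frac{\dx}{\e x+L}\;\leq\;n,$$
which integrates to $\f^{\circ n}(t)\leq t\,e^{n\e}+\tfrac{L}{\e}(e^{n\e}-1)$. (The same bound drops out of direct iteration of the affine map $\f$.) Substituting $n=2^{k+1}-1$ and $t=\|a-b\|/2^{k+1}$, and writing $m:=2^{k+1}$, the target reduces to the two coefficient inequalities
$$\frac{e^{(m-1)\e}}{m}\leq 3\e\qquad\text{and}\qquad e^{(m-1)\e}\leq 5.$$

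Setting $c:=(m-1)\e$, these become essentially $e^{c}\leq 3c$ and $c\leq\ln 5$. A short calculation shows $e^{c}\leq 3c$ holds on a closed interval $[c_1,c_2]\subset(0,\ln 5)$ with $c_2/c_1>2$ (numerically $c_1\approx 0.62$, $c_2\approx 1.51$). Since the admissible values $m\e=2^{k+1}\e$ form a geometric progression of ratio exactly $2$, at least one of them must fall inside $[c_1,c_2]$, and the assumption $0<\e<0.2$ ensures that the corresponding $k$ is a non-negative integer. Selecting this $k$ establishes both inequalities simultaneously and yields the stated bound.

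The main obstacle is the arithmetic check that the feasible window $[c_1,c_2]$ cut out by $e^{c}\leq 3c$ really has length-ratio greater than $2$ — this is precisely what forces the constants $3$ and $4$ in the statement, and what guarantees that the dyadic choices $m=2^{k+1}$ always reach a feasible value. In practice, I would take $k$ to be the largest non-negative integer with $2^{k+1}\e\leq c_2$, so that $m\e\in(c_2/2,c_2]\subset[c_1,c_2]$; the two required inequalities are then verified by direct substitution using $\e<0.2$.
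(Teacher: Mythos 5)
Your proposal is correct and follows essentially the same route as the paper: apply Theorem \ref{tw coarse mazur ulam}, bound the iterate $\f^{\circ n}$ via Lemma \ref{lemma_on composition and integral} to get $\f^{\circ n}(t)\leq te^{n\e}+\tfrac{L}{\e}(e^{n\e}-1)$, and then pick the dyadic $2^{k+1}$ appropriately. The only cosmetic difference is in the selection of $k$: the paper centers the dyadic value in $[\tfrac{1}{\sqrt{2}\e},\tfrac{\sqrt{2}}{\e}]$ around the minimizer $1/\e$ of $e^{\e k}/k$ and checks the endpoints, whereas you carve out the feasibility window of $e^{c}\leq 3c$ directly; both choices work (and your small mismatch between $c=(m-1)\e$ and the progression $m\e$ is harmless since $e^{(m-1)\e}\leq e^{m\e}$).
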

\noindent Let us explain that for $\e \geq 0.2$, we easily obtain
$$  \|T(\frac{a+b}{2})-\frac{Ta+Tb}{2}\| \leq \frac{1+\e}{2}\|a-b\| + \frac{L}{2}$$
which is a better estimate than the one from the above corollary
when $\|a-b\| \rightarrow \infty$. The above result is the most
interesting when $\e$ is close to 0 and $\|a-b\| \rightarrow
\infty$.
\begin{proof}
From Theorem \ref{tw coarse mazur ulam} we obtain that:
$$ \|T(\frac{a+b}{2})-\frac{Ta+Tb}{2}\| \leq \f^{\circ (k-1)}(\frac{d}{k})\leq \f^{\circ k}(\frac{d}{k})$$
where $k=2^{n+1}$ and $d=\|a-b\|$. From Lemma \ref{lemma_on
composition and integral} we get that $$\int^{\f^{\circ
k}(\frac{d}{k})}_{\frac{d}{k}} \frac{1}{\e x+L} \dx \leq k.$$
Hence $\f^{\circ k}(\frac{d}{k}) \leq \frac{e^{\e k}}{k}d
+\frac{1}{\e}(e^{\e k}-1)L$. Function $k \mapsto \frac{e^{\e
k}}{k}$ has its minimum at $k=\frac{1}{\e}$ which is $e \e$. Since
in our application $k=2^{n+1}$ we have to find $n$ so that
$2^{n+1}$ is as close to $\frac{1}{\e}$ as possible. For $\e < 0.2
< \frac{1}{\sqrt{2}}$ there exists \mbox{$n \in
[\log_{2}{\frac{1}{\e}}-1.5; \log_{2}{\frac{1}{\e}}-0.5] \cap
\mathbb{Z}_+$}. Hence $2^{n+1}=k \in
[\frac{1}{\sqrt{2}\e};\frac{\sqrt{2}}{\e}]$ and this interval
contains $\frac{1}{\e}$. Checking the values of $\frac{e^{\e
k}}{k}$ at the endpoints we obtain that $\f^{\circ k}(\frac{d}{k})
\leq 3\e d +\frac{4}{\e}L$.
\end{proof}
For the Lipschitz case ($L=0$) similar estimations can be found in
\cite{Vest1}. Vestfrid obtains the inequality $\|T(\frac{a+b}{2})
- \frac{Ta+Tb}{2}\| \leq 6\e\|a-b\|$. So one can see that the
above result improves the existing estimate as well as extends it
onto maps that are not necessarily continuous (\mbox{$L>0$}). By
applying the above Corollary we can also obtain some interesting
estimates for bi-Lipschitz maps between $\xi$-dense subspaces of
Banach spaces (nets in particular):

\begin{wn}\label{wn o sieciach}
Let us consider a $\f$-isometry $T: A \mapsto B$ from a $\xi_E$
dense set in Banach space $E$ onto a $\xi_F$ dense set in $F$,
where \mbox{$\f(t)=(1+\e)t$} and $0<\e<0.2$. Then for every $a,b
\in A$ and every $z \in A$ such that $\|\frac{a+b}{2}-z\| \leq
\xi_E$ we have:

$$ \|Tz-\frac{Ta+Tb}{2}\| \leq 3\e\|a-b\|+\frac{34(\xi_E+\xi_F)}{\e}.$$
\end{wn}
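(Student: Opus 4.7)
The plan is to reduce the corollary to Corollary \ref{oszacowanie dla zgrubnie lip} by extending $T:A\to B$ to a bijective $\f'$-isometry $\widetilde{T}:E\to F$ with $\f'(t)=(1+\e)t+L$ and $L=O(\xi_E+\xi_F)$, using the same set-theoretic construction that underlies Fact \ref{surjekcja a bijekcja}. First, I would fix a maximal $\xi_E$-separated set $A_0\subseteq A$. By the $\xi_E$-density of $A$ in $E$ and the upper Lipschitz estimate for $T$, this $A_0$ is then $2\xi_E$-dense in $E$, and $T(A_0)\subseteq B$ is $(\xi_F+(1+\e)\xi_E)$-dense in $F$. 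I would then form Voronoi-type partitions $E=\dot{\bigcup}_{a_0\in A_0} E_{a_0}$ and $F=\dot{\bigcup}_{a_0\in A_0} F_{a_0}$ with $a_0\in E_{a_0}$, $T(a_0)\in F_{a_0}$ and all cell diameters of order $O(\xi_E+\xi_F)$. Reshuffling elements between neighbouring cells exactly as in the proof of Fact \ref{surjekcja a bijekcja}, I can arrange $|E_{a_0}|=|F_{a_0}|$ while keeping cell diameters of the same order, and then define $\widetilde{T}$ on each $E_{a_0}$ to be an arbitrary bijection onto $F_{a_0}$ sending $a_0$ to $T(a_0)$.

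The triangle inequality combined with the $\f$-isometry bound for $T|_{A_0}$ then shows that $\widetilde{T}$ is a $\f'$-isometry with $L=O(\xi_E+\xi_F)$, and moreover that $\|\widetilde{T}(x)-T(x)\|=O(\xi_E+\xi_F)$ for every $x\in A$ (since both points lie in an $O(\xi_E+\xi_F)$-ball around $T(a_0)$, where $E_{a_0}$ is the cell containing $x$). Corollary \ref{oszacowanie dla zgrubnie lip} applied to $\widetilde{T}$ at the points $a,b$ then yields
$$\|\widetilde{T}(\tfrac{a+b}{2})-\tfrac{\widetilde{T}(a)+\widetilde{T}(b)}{2}\|\leq 3\e\|a-b\|+\frac{4L}{\e}.$$

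To conclude, since $\|z-\tfrac{a+b}{2}\|\leq\xi_E$, the $\f'$-isometry condition gives $\|\widetilde{T}(z)-\widetilde{T}(\tfrac{a+b}{2})\|\leq(1+\e)\xi_E+L$, and combining this via the triangle inequality with the approximation estimates $\|\widetilde{T}(x)-T(x)\|=O(\xi_E+\xi_F)$ for $x\in\{a,b,z\}$ gives
$$\|Tz-\tfrac{Ta+Tb}{2}\|\leq 3\e\|a-b\|+\frac{4L}{\e}+O(\xi_E+\xi_F).$$
Since $\e<0.2$, the $1/\e$ term dominates the additive $O(\xi_E+\xi_F)$ contributions, so all of them can be absorbed into a single coefficient in front of $(\xi_E+\xi_F)/\e$, giving the claimed bound. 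The hard part will be the set-theoretic cell-matching step (ensuring $|E_{a_0}|=|F_{a_0}|$ with diameters $O(\xi_E+\xi_F)$, as in Fact \ref{surjekcja a bijekcja}) together with the careful bookkeeping of constants required to hit the specific coefficient $34$.
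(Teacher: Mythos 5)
Your proposal is correct and follows essentially the same route as the paper: the paper likewise extends $T$ to a bijective $\f$-isometry $\widetilde{T}:E\mapsto F$ with $\f(t)=(1+\e)t+4\xi_F+3\xi_E$ and $\|\widetilde{T}x-Tx\|\leq 2\xi_F+2\xi_E$ for $x\in A$ (citing Fact 1.5 of \cite{Gor}, or the argument of Fact \ref{surjekcja a bijekcja}), and then applies Corollary \ref{oszacowanie dla zgrubnie lip} to $\widetilde{T}$ together with the triangle inequality. With those explicit constants the additive terms indeed absorb into $34(\xi_E+\xi_F)/\e$ for $\e<0.2$, so the bookkeeping you flag as the remaining work goes through.
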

\begin{proof}
Using a simple Fact 1.5 from \cite{Gor} (or reasoning similarly as
in the proof of Fact \ref{surjekcja a bijekcja}) we obtain a map
$\widetilde{T}:E \mapsto F$ which is a $\f$-isometry for
$\f(t)=(1+\e)t+4\xi_F+3\xi_E$ and $\|\widetilde{T}x-Tx\| \leq
2\xi_F+2\xi_E$ for all $x \in A$. Let us take any $z \in A$ such
that $\|\frac{a+b}{2}-z\| \leq \xi_E$. Applying Corollary
\ref{oszacowanie dla zgrubnie lip} to the map $\widetilde{T}$, we
obtain the desired estimation.
\end{proof}

We will show now how Corollary \ref{oszacowanie dla zgrubnie lip}
allows us to obtain improvements on the constant $M_0$ and the
function $\delta$ in Theorem \ref{Banach-Stone ogolnie} for all
{\bf locally compact } spaces.

\begin{tw}\label{tw coarse banach stone}
Let $X$ and $Y$ be locally compact spaces. Consider a
\mbox{$\f$-isometry} \mbox{$T:C_0(X) \mapsto C_0(Y)$}  where
$\f(t)=Mt+L$ ($M \geq 1$, $L \geq 0$). If \mbox{$M < M_0=
\sqrt{\frac{16}{15}}$} then there exists a homeomorphism
$\varphi:X \mapsto Y$ and a continuous map $\lambda : X \mapsto
\{-1,1\}$ such that for every $f \in C_0(X)$

\begin{equation}\label{satbility eq}
\|Tf - If\| \leq 76(M-1)\|f\| + \D
\end{equation}
where $I$ is the isometry defined as
$If(y)=\lambda(\varphi^{-1}(y))f(\varphi^{-1}(y))$. The constant
$\D$ depends on $M$ and $L$ only.  Moreover, for $L=0$ we have
$\D=0$.
\end{tw}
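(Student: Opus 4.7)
With $\e = M-1$, the hypothesis $M<\sqrt{16/15}$ gives $\e<1/30<0.2$, so that Corollary \ref{oszacowanie dla zgrubnie lip} applies and yields the ``approximate affinity'' estimate
$$\bigl\|T(\tfrac{f+g}{2})-\tfrac{Tf+Tg}{2}\bigr\|\leq 3\e\|f-g\|+\tfrac{4L}{\e}\quad\text{for all }f,g\in C_0(X).$$
My plan is to use this estimate as the engine of a peak-point construction in the style of \cite{Gor}, producing $\varphi$ and $\lm$, and then to bridge the gap between the compact case and the locally compact one, which is the genuinely new contribution of the theorem.

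The first step is the point correspondence. For each $y\in Y$ fix a peaking function $g_y\in C_0(Y)$ with $g_y(y)=\|g_y\|=1$ and tight support near $y$. Applying $T^{-1}$ and invoking the midpoint estimate on averages of the form $\tfrac{1}{2}(g_y+\alpha g_y)$ (with $\alpha$ close to $1$) and $\tfrac{1}{2}(g_y+g_{y'})$ (with $y\neq y'$), one extracts: (i) a unique point $x=\varphi^{-1}(y)\in X$ and sign $\lm(x)\in\{-1,1\}$ at which $T^{-1}(g_y)$ attains its norm up to controlled error; (ii) injectivity, because if two different $y,y'$ shared a peak point then $\tfrac{1}{2}(g_y+g_{y'})$ would be forced to peak there too, contradicting the estimate. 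Swapping $T$ and $T^{-1}$ yields surjectivity, and small perturbations of $g_y$ combined with the midpoint estimate deliver continuity of $\varphi$ and $\lm$.

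The main obstacle is the passage from compact to locally compact, which is precisely why \cite{Gor} left this case open. In a compact space one has the constant function $1$ available, and peaking functions can be normalised conveniently; in $C_0(X)$ they must vanish at infinity, and one has to rule out that the peak point extracted from $T^{-1}(g_y)$ escapes to infinity. I would handle this by compactifying: embed $C_0(X)\hookrightarrow C(X^*)$ and extend $T$ via $\widetilde T(f+c\cdot 1_{X^*})=Tf+c\cdot 1_{Y^*}$, verifying that $\widetilde T$ is a $\widetilde{\f}$-isometry with $\widetilde{\f}(t)=Mt+L'$ for an enlarged additive constant $L'=L'(M,L)$. Crucially, the multiplicative constant $M$ is preserved, so the threshold $M_0=\sqrt{16/15}$ is not degraded. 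Applying the compact-case peak-point construction to $\widetilde T$ then produces a homeomorphism $\widetilde\varphi:X^*\to Y^*$, and the fact $T(0)=0$ forces $\widetilde\varphi(\infty_X)=\infty_Y$, so $\widetilde\varphi$ restricts to the desired $\varphi:X\to Y$.

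Finally, the quantitative bound $\|Tf-If\|\leq 76(M-1)\|f\|+\D$ is obtained by comparing $Tf$ with $If$ pointwise at each $y=\varphi(x)$ after approximating $f$ by affine combinations of peaking functions and iterating the midpoint inequality. The constant $76$ comes from tracking the $3\e\|\cdot\|$ errors through the accumulation; $\D$ absorbs the $4L/\e$ contributions and hence vanishes when $L=0$, as required. The threshold $M_0=\sqrt{16/15}$ emerges as the largest $M$ for which these multiplicative errors remain below the margin needed to separate peaks in the extraction. The hardest single step will be the compactification: verifying that extending $T$ by constants inflates only $L$, not $M$, is essential for preserving $M_0$, and requires careful handling of how the extension interacts with the original $\f$-isometry estimate.
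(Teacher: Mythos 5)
Your reduction to the compact case via one-point compactifications is the crux of your argument, and it is exactly the step that fails. The extension $\widetilde T(f+c\cdot 1)=Tf+c\cdot 1$ does \emph{not} preserve the multiplicative constant $M$: the sup norm on $C(X^*)=C_0(X)\oplus\Rz\cdot 1$ does not split along this direct sum, and the $\f$-isometry condition controls only the norm $\|Tf-Tg\|$, not the pointwise relation between $Tf-Tg$ and $f-g$. Concretely, take the genuine isometry $Th=-h$ (so $M=1$, $L=0$), a function $h\geq 0$ with $\|h\|=N$, and constants with $c-d=-\tfrac{N}{2}$. Then $\|(h-0)+(c-d)\|_{C(X^*)}=\tfrac{N}{2}$ (the values of $h-\tfrac{N}{2}$, including at $\infty_X$, lie in $[-\tfrac{N}{2},\tfrac{N}{2}]$), while $\|\widetilde T(h+c)-\widetilde T(0+d)\|_{C(Y^*)}=\|h+\tfrac{N}{2}\|=\tfrac{3N}{2}$. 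So even an exact isometry extends only to a $3$-bi-Lipschitz map, far above $M_0=\sqrt{16/15}$; for general $M$ (e.g.\ $Th=-Mh$) the naive extension inflates the constant to roughly $2M+1$. Since your argument explicitly rests on ``the multiplicative constant $M$ is preserved,'' this is a fatal gap rather than a technicality, and no choice of the enlarged additive constant $L'$ can repair it because the counterexample scales linearly in $N$.

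The paper's proof keeps $T$ on $C_0(X)$ and instead isolates the one place in \cite{Gor} where compactness is genuinely used, namely the nonemptiness of the peak sets $S^D_m(x)$. The fix (Fact \ref{locally compact fact}) is to average the peak net $f^m_{\sigma}$ with one fixed member $f=f^m_{\sigma_0}$, apply Corollary \ref{oszacowanie dla zgrubnie lip} to $g^m_{\sigma}=\frac{f^m_{\sigma}+f}{2}$, and conclude that the points $y^m_{\sigma}$ where $Tg^m_{\sigma}$ is large all lie in the superlevel set $\{y\in Y:\;Tf(y)\geq(14-13M)m\}$ of the \emph{single} function $Tf\in C_0(Y)$, which is compact precisely because $Tf$ vanishes at infinity; this replaces compactness of $Y$. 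The price is the weaker peak constant $D=14-13M$ (instead of $4-3M$ in the compact case), and substituting $\e(M)=2M-1-D=15(M-1)$ into the condition $1-\e(M)M-\e(M)>0$ is what yields $16-15M^2>0$, i.e.\ $M_0=\sqrt{16/15}$, and ultimately the constant $76$ in (\ref{satbility eq}). Your proposal never derives these numbers, and your peak-point extraction (run through $T^{-1}$ of peaking functions on $Y$ rather than through peak nets on $X$) would face the same escape-to-infinity issue you correctly identify but do not resolve.
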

As we can see the constant $M_0$ improves the result obtained by
Dutrieux and Kalton. However, more important is the estimation
\mbox{$\delta(M) \leq 76(M-1)$} that is far better then the
previously known, obtained by Jarosz in \cite{Jarosz}.
\begin{proof}
Let us assume that indeed $1<M<\sqrt{\frac{16}{15}}$. If $M=1$
then the above theorem easily follows from the mentioned solution
of the Hyers-Ulam problem (Corollary \ref{Hyers-Ulam}) and from
the Banach-Stone theorem. Let us first recall the construction of
the homeomorphism $\varphi$ and the function $\lambda$ from
\cite{Gor}.

In the construction, when dealing with topology of general
topological spaces, we use the notion of Moore-Smith convergence.
$\Sigma$ will always denote a directed set and whenever we write
$a_{\sigma} \rightarrow a$ we always mean $\lim_{\sigma \in
\Sigma}a_{\sigma}= a$.
\begin{df}
$(f^m_{\sigma})_{{\sigma} \in \Sigma} \subset C(X)$ is the m-peak
sequence at $x \in X$, for some directed set $\Sigma$ if
\begin{itemize}
\item $\|f^m_{\sigma}\|=|f^m_{\sigma}(x)|=m$ for all ${\sigma} \in
\Sigma$, \item $\lim_{{\sigma} \in \Sigma} f^m_{\sigma}|(X
\setminus U) \equiv 0$ uniformly for all open neighborhoods $U$ of
$x$.
\end{itemize}
The set of m-peak sequences at $x$ we denote by $P^X_m(x)$.
\end{df}
\begin{df} Let $D>0$ and $m>0$. We define the following:\\
$S^D_m(x)=\{y \in Y \textrm{; \;} \exists (f^m_{\sigma})_{{\sigma}
\in \Sigma} \in P^X_m(x) \textrm{\;} \exists y_{\sigma}
\rightarrow y \forall {\sigma} \in \Sigma \textrm{\;}
Tf^m_{\sigma}(y^m_{\sigma}) \geq Dm \textrm{ and }\\
T(-f^m_{\sigma})(y^m_{\sigma}) \leq -Dm\}$.\\
\end{df}

In \cite{Gor} author proves that for suitably chosen $D$ and $m$
we can define $\varphi(x)=S^D_m(x)$ that appears to be a
homeomorphism between $X$ and $Y$. In all the steps in \cite{Gor}
where we prove that $\varphi$ is a homeomorphism the only place
were compactness is crucial is Fact 2.4. We will modify its proof
using Corollary \ref{oszacowanie dla zgrubnie lip} so that it
works for the locally compact case.

\begin{fakt}\label{locally compact fact}
Let us consider $D$ such that $D = 14-13M$. There exists $m_0$
(depending on $M$ and $L$) such that for all $m > m_0$ we have
$S^D_m(x) \neq \emptyset$ for all $x \in X$. Moreover if $L=0$
then $m_0=0$.\\
\end{fakt}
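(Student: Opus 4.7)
The plan is to apply the midpoint bound Corollary~\ref{oszacowanie dla zgrubnie lip} to the pair $(a,b)=(f^m_\sigma,-f^m_\sigma)$. Because $T(0)=0$ and $\|a-b\|=2m$, it yields
\[
\Bigl\|\tfrac{1}{2}\bigl(Tf^m_\sigma+T(-f^m_\sigma)\bigr)\Bigr\|\leq 6(M-1)m+\tfrac{4L}{M-1},
\]
so $Tf^m_\sigma$ and $-T(-f^m_\sigma)$ coincide in the supremum norm up to a controlled error. The half-difference $g_\sigma:=\tfrac{1}{2}(Tf^m_\sigma-T(-f^m_\sigma))\in C_0(Y)$ consequently satisfies $\|Tf^m_\sigma-g_\sigma\|=\|T(-f^m_\sigma)+g_\sigma\|\leq 6(M-1)m+\tfrac{4L}{M-1}$, while the reverse $\f$-Lipschitz estimate applied to the pair $(Tf^m_\sigma,T(-f^m_\sigma))$ gives the lower bound $\|g_\sigma\|\geq m/M-L/(2M)$.

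The second step is to fix any $m$-peak sequence $(f^m_\sigma)_{\sigma\in\Sigma}\in P^X_m(x)$ and to pick, for every $\sigma$, a point $y^m_\sigma\in Y$ at which $|g_\sigma|$ essentially achieves its norm --- this is possible since $g_\sigma\in C_0(Y)$. Negating the peak sequence if necessary (which merely flips the sign $\lambda$ at $\varphi(x)$), we may assume $g_\sigma(y^m_\sigma)\geq 0$. Combining the bounds above gives
\[
Tf^m_\sigma(y^m_\sigma)\geq\Bigl(\tfrac{1}{M}-6(M-1)\Bigr)m-\Bigl(\tfrac{1}{2M}+\tfrac{4}{M-1}\Bigr)L,
\]
and symmetrically the negative of the right-hand side is an upper bound for $T(-f^m_\sigma)(y^m_\sigma)$. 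A direct computation reduces $\tfrac{1}{M}-6(M-1)\geq 14-13M$ to $(7M-1)(M-1)\geq 0$, which holds for every $M\geq 1$. Setting $D=14-13M$, the two inequalities $Tf^m_\sigma(y^m_\sigma)\geq Dm$ and $T(-f^m_\sigma)(y^m_\sigma)\leq -Dm$ therefore hold uniformly in $\sigma$ as soon as $m\geq m_0(M,L)$, with $m_0=0$ when $L=0$.

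The remaining step, and what I expect to be the main technical obstacle, is to extract a subnet of $(y^m_\sigma)$ converging to some $y\in Y$. In the compact case of \cite{Gor} this is automatic; in our locally compact setting we must additionally exclude escape to infinity in the one-point compactification $Y^*=Y\cup\{\infty\}$. The strict positivity $D>0$, guaranteed precisely by $M<\sqrt{16/15}$, forces every $y^m_\sigma$ into the compact super-level set $\{y\in Y:Tf^m_\sigma(y)\geq Dm\}$. To block a subnet $y^m_\sigma\to\infty$ I would exploit that the peak condition makes $f^m_\sigma-f^m_{\sigma'}$ uniformly small on the complement of any neighbourhood of $x$, combine this with the $\f$-Lipschitz bound on $Tf^m_\sigma-Tf^m_{\sigma'}$ and a further application of Corollary~\ref{oszacowanie dla zgrubnie lip} to the midpoint $(f^m_\sigma+f^m_{\sigma'})/2$ (still an $m$-peak at $x$), and thereby pin all $y^m_\sigma$ cofinally into a single compact subset of $Y$. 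Once non-escape is established, the limit $y\in Y$ together with our choices of $(f^m_\sigma)$ and $(y^m_\sigma)$ witnesses $S^D_m(x)\neq\emptyset$.
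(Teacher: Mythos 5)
Your first two steps are sound, and they take a mildly different route to the pointwise estimates than the paper does: the paper never invokes the pair $(f^m_\sigma,-f^m_\sigma)$ together with $T(0)=0$. Instead it works from the outset with the midpoint $g^m_\sigma=\frac{f^m_\sigma+f}{2}$, where $f=f^m_{\sigma_0}$ is one \emph{fixed} member of the peak sequence, chooses $y^m_\sigma$ as a point where $|Tg^m_\sigma(y)-T(-g^m_\sigma)(y)|\ge\frac{2}{M}m-L$, and then pushes the resulting lower bound through Corollary \ref{oszacowanie dla zgrubnie lip} to all three functions $Tf^m_\sigma$, $T(-f^m_\sigma)$ and $Tf$ at the same point $y^m_\sigma$. (Two small remarks on your version: the sign normalization has to be made uniformly in $\sigma$, which requires passing to a subnet rather than a single global negation of the peak sequence; and your inequality $\frac1M-6(M-1)\ge 14-13M$, equivalent to $(7M-1)(M-1)\ge 0$, does check out.)

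The genuine gap is in your third step, which is the entire point of this Fact in the locally compact setting. As you yourself observe, the super-level set $\{y: Tf^m_\sigma(y)\ge Dm\}$ depends on $\sigma$, so it does not confine the net $(y^m_\sigma)$ to one compact set. Your proposed repair --- a further application of Corollary \ref{oszacowanie dla zgrubnie lip} to $(f^m_\sigma+f^m_{\sigma'})/2$ for a fixed $\sigma'$ --- contains the right ingredient but cannot be grafted onto the points you have already chosen: $y^m_\sigma$ was selected as a near-maximizer of $|g_\sigma|=\frac12|Tf^m_\sigma-T(-f^m_\sigma)|$, and nothing forces $Tf^m_{\sigma'}$ to be large at that particular point. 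The midpoint inequality for $(f^m_\sigma+f^m_{\sigma'})/2$ only yields a lower bound for $Tf^m_{\sigma'}$ at a point where $T\bigl((f^m_\sigma+f^m_{\sigma'})/2\bigr)$ is already known to be large, and you have no such information at $y^m_\sigma$; likewise the uniform smallness of $f^m_\sigma-f^m_{\sigma'}$ off neighbourhoods of $x$ gives no control on $\|f^m_\sigma-f^m_{\sigma'}\|$, which can be of order $2m$. The fix is to reverse the order of operations: select $y^m_\sigma$ from the start as a point where $|T(\tfrac{f^m_\sigma+f}{2})(y)-T(-\tfrac{f^m_\sigma+f}{2})(y)|\ge\frac{2}{M}m-L$ with $f$ fixed. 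Then one chain of estimates simultaneously yields $Tf^m_\sigma(y^m_\sigma)\ge Dm$, $T(-f^m_\sigma)(y^m_\sigma)\le -Dm$ and $Tf(y^m_\sigma)\ge Dm$, and the last inequality places every $y^m_\sigma$ in the single compact set $\{y\in Y:\ Tf(y)\ge Dm\}$, from which the convergent subnet exists. That is exactly the paper's proof.
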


\begin{proof}
Let us take any $(\widetilde{f}^m_{\sigma})_{\sigma \in \Sigma}
\in P^X_m(x)$ such that for all $\sigma \in \Sigma$
$\widetilde{f}^m_{\sigma}(x) = m$ and pick one $\sigma_0 \in
\Sigma$.  Let us define
$\widetilde{g}^m_{\sigma}=\frac{\widetilde{f}^m_{\sigma}+\widetilde{f}}{2}$
where $\widetilde{f}=\widetilde{f}^m_{\sigma_0}$. We have
$$\forall \sigma \in \Sigma \; \|T\widetilde{g}^m_{\sigma}-T(-\widetilde{g}^m_{\sigma})\| \geq
\frac{2}{M}m-L.$$ Hence $\forall \sigma \in \Sigma$ there exists
$y^m_{\sigma} \in Y$ such that
$|T\widetilde{g}^m_{\sigma}(y^m_{\sigma})-T(-\widetilde{g}^m_{\sigma})(y^m_{\sigma})|
\geq \frac{2}{M}m-L$. Let us observe that numbers
$T\widetilde{g}^m_{\sigma}(y^m_{\sigma})$ and
$T(-\widetilde{g}^m_{\sigma})(y^m_{\sigma})$ must be of different
signs. Assume the contrary. Since $\|T(\pm
\widetilde{g}^m_{\sigma})\| \leq Mm+L$ we have $Mm+L \geq
\frac{2}{M}m-L$ which is impossible for $m$ large enough, say $m >
m'_0$ (or for all $m>0$ if $L=0$), provided $\frac{2}{M}>M$ (that
is if $M < \sqrt{2}$). We can and we do assume that $\forall
\sigma \in \Sigma$ $T\widetilde{g}^m_{\sigma}(y^m_{\sigma}) \geq
0$ or $\forall \sigma \in \Sigma$
$T\widetilde{g}^m_{\sigma}(y^m_{\sigma}) \leq 0$. Let us define:
\begin{itemize}
\item If $\forall \sigma \in \Sigma \;
T\widetilde{g}^m_{\sigma}(y^m_{\sigma}) \geq 0$ then
$f^m_{\sigma}=\widetilde{f}^m_{\sigma}$, $f=\widetilde{f}$ and
$g^m_{\sigma}=\frac{f^m_{\sigma}+f}{2}$.

\item If $\forall \sigma \in \Sigma \;
T\widetilde{g}^m_{\sigma}(y^m_{\sigma}) \leq 0$ then
$f^m_{\sigma}=-\widetilde{f}^m_{\sigma}$, $f=-\widetilde{f}$ and
$g^m_{\sigma}=\frac{f^m_{\sigma}+f}{2}$.
\end{itemize}
Hence $Tg^m_{\sigma}(y^m_{\sigma})-T(-g^m_{\sigma})(y^m_{\sigma})
\geq \frac{2}{M}m-L$. Because $\|T(\pm g^m_{\sigma})\| \leq Mm+L$
then
$$Tg^m_{\sigma}(y^m_{\sigma}) \geq (\frac{2}{M}-M)m-2L$$
$$T(-g^m_{\sigma})(y^m_{\sigma}) \leq -(\frac{2}{M}-M)m+2L.$$
Since $g^m_{\sigma}=\frac{f^m_{\sigma}+f}{2}$ and by Corollary
\ref{oszacowanie dla zgrubnie lip} we obtain
$$\|T(\pm g^m_{\sigma})-\frac{T(\pm f^m_{\sigma})+T(\pm f)}{2}\|
\leq 3(M-1)m+\frac{4}{M-1}L.$$ Hence

$$Tf^m_{\sigma}(y^m_{\sigma}) \geq (\frac{4}{M}-9M+6)m-(5+\frac{8}{M-1})L,$$
$$T(-f^m_{\sigma})(y^m_{\sigma}) \leq -(\frac{4}{M}-9M+6)m+(5+\frac{8}{M-1})L,$$
$$Tf(y^m_{\sigma}) \geq (\frac{4}{M}-9M+6)m-(5+\frac{8}{M-1})L.$$
Let us consider $m_0 \geq m'_0$ such that
$$(\frac{4}{M}-9M+6)m_0-(5+\frac{8}{M-1})L \geq (14-13M)m$$
for all $m>m_0$ (we can do so since $\frac{4}{M}-9M+6>14-13M>0$
for all positive $M \neq 1$). By the compactness of the set
$$\{y \in Y: \; Tf(y) \geq (14-13M)m\}$$ for $m>m_0$
we can assume that $y^m_{\sigma} \rightarrow y \in S^D_m(x)$. Let
us notice that for $L=0$ we have $m_0=0$.
\end{proof}

Now the proof of Theorem \ref{tw coarse banach stone} is exactly
the same as the proof of Theorems 2.1 and Corollary 3.4 from
\cite{Gor}. Firstly, it is proven in \cite{Gor} Section 2 that
$\varphi(x)=S^D_m(x)$ is a homeomorphism for suitably chosen
$m>m_2$ (where $m_2=0$ if $L=0$) if
\begin{itemize}
\item[(i)] $D$ is so that $S^D_m(x) \neq \emptyset$ for all $x \in
X$;

\item[(ii)] $1-\e(M)M-\e(M)>0$ where $\e(M)=2M-1-D$.

\end{itemize}
In the compact case the condition (i) means that it is enough to
take \mbox{$D=4-3M<\frac{2}{M}-M$} (see Fact 2.4 in \cite{Gor}).
This, together with condition (ii), leads to a conclusion that
indeed $M<\sqrt{\frac{6}{5}}$. In the locally compact case we have
already shown (Fact \ref{locally compact fact}) that we can take
$D=14-13M$. Now the condition (ii) leads us to the inequality $M <
\sqrt{\frac{16}{15}}$.

For every $x \in X$ and $m>m_0$ let us define (following
\cite{Gor} Section 3)
$\lambda_m(x)=\frac{f^m_{\sigma}(x)}{|f^m_{\sigma}(x)|}$ where the
family $(f^m_{\sigma})_{{\sigma} \in \Sigma} \in P^X_m(x)$  is
such that:

\begin{itemize}

\item  $\forall \sigma_0,\sigma_1 \in \Sigma$
$\frac{f^m_{\sigma_0}(x)}{|f^m_{\sigma_0}(x)|}=\frac{f^m_{\sigma_1}(x)}{|f^m_{\sigma_1}(x)|}$
($\lambda_m(x)$ does not depend on $\sigma$).

\item $ \exists y_{\sigma} \rightarrow y \in S^D_m(x)$ such that
for every ${\sigma} \in \Sigma$ we have
$Tf^m_{\sigma}(y^m_{\sigma}) \geq Dm$ and \mbox{$
T(-f^m_{\sigma})(y^m_{\sigma}) \leq -Dm$}.
\end{itemize}
The existence of the above family for every $x \in X$ is exactly
what was shown in the proof of Fact \ref{locally compact fact}.
Let us say that the function $\lambda$ from the formulation of
Theorem \ref{tw coarse banach stone} is defined as $\lambda_m$ for
$m$ sufficiently large.

In order to prove (\ref{satbility eq}) it is enough to notice that
Fact 2.7 in \cite{Gor} works also for $X$ locally compact and
hence gives us the estimation
$$\big{|}|Tf(\varphi(x))| - |f(x)|\big{|} \leq \e(M)M \|f\| + \D=15(M^2-M) \|f\| + \D$$
for all $f \in C_0(X)$, $x \in X$ and some constant $\D$ depending
on $M$, $L$ and such that $\D=0$ if $L=0$.

Repeating the reasoning of Section 3 from \cite{Gor} for
$D=14-13M$ we obtain a slightly modified Fact 3.1 (only one
constant is changed):
\begin{fakt}\label{fakt o znakach}
Assume that $|f(x)| > 30(M-1)\|f\|$ and let $\|f\| = m$. Then for
$m>m_3$  ($m_3 \geq 0$ depends on $M$ and $L$ only), the sign of
$Tf(\varphi(x))$ is the same as the sign of $\lambda_m(x) f(x)$.
If $L=0$ then $m_3=0$.
\end{fakt}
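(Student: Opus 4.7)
The proof is essentially the verbatim argument of Fact 3.1 in \cite{Gor}, with the compact-case constant $D = 4 - 3M$ replaced by $D = 14 - 13M$ from Fact \ref{locally compact fact} and with each invocation of the compact Lipschitz midpoint estimate replaced by Corollary \ref{oszacowanie dla zgrubnie lip}, whose extra error term $\tfrac{4L}{M-1}$ will be absorbed into the threshold $m_3$.

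By symmetry I may assume $\lambda_m(x) = 1$ and $f(x) > 30(M-1)\|f\|$. The peak sequence $(f^m_\sigma)_{\sigma \in \Sigma}$ fixed in the construction of $\lambda_m(x)$ then satisfies $f^m_\sigma(x) = m$, and there is an associated net $y^m_\sigma \to \varphi(x)$ with $Tf^m_\sigma(y^m_\sigma) \geq Dm$ and $T(-f^m_\sigma)(y^m_\sigma) \leq -Dm$. The plan is to apply Corollary \ref{oszacowanie dla zgrubnie lip} to the pair $(f, f^m_\sigma)$ and to its three sign-flipped variants, so that at the point $y^m_\sigma$ one obtains four midpoint approximations
\[
\bigl|T\bigl(\tfrac{\pm f \pm f^m_\sigma}{2}\bigr)(y^m_\sigma) - \tfrac{T(\pm f)(y^m_\sigma) + T(\pm f^m_\sigma)(y^m_\sigma)}{2}\bigr| \leq 6(M-1)m + \tfrac{4L}{M-1}.
\]
Combining these with the Lipschitz bounds $\|T(g)\| \leq M\|g\| + L$ applied to $g = (\pm f \pm f^m_\sigma)/2$ and with the separation inequalities $\pm Tf^m_\sigma(y^m_\sigma) \geq Dm$ --- exactly as in the short algebraic manipulation of Fact 3.1 in \cite{Gor} --- one deduces $Tf(y^m_\sigma) > 0$ for $\sigma$ sufficiently advanced and for $m > m_3(M,L)$. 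Passing to the limit $y^m_\sigma \to \varphi(x)$ and combining with the Fact 2.7--type estimate $\bigl||Tf(\varphi(x))| - |f(x)|\bigr| \leq 15M(M-1)\|f\| + \Delta$ to rule out vanishing of the limit yields the claimed sign agreement.

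The sole obstacle is the constant bookkeeping. With the new $D$ the quantity $\varepsilon(M) := 2M - 1 - D = 15(M-1)$ is three times larger than in the compact case, so the hypothesis of \cite{Gor} Fact 3.1, which reads $|f(x)| > 2\varepsilon(M)\|f\|$, now reads $|f(x)| > 30(M-1)\|f\|$ --- precisely the constant stated. All $L$-dependent error terms, which arise only via the $\tfrac{4L}{M-1}$ contributions of Corollary \ref{oszacowanie dla zgrubnie lip} and the additive $L$ in the norm bounds, collect into the single threshold $m_3$; when $L=0$ they disappear identically, so $m_3 = 0$ as asserted.
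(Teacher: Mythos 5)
Your proposal matches the paper's own treatment: the paper likewise proves this fact simply by rerunning the argument of Fact 3.1 in \cite{Gor} with $D=14-13M$ in place of $4-3M$, using Corollary \ref{oszacowanie dla zgrubnie lip} for the midpoint estimates, and your constant bookkeeping $2\varepsilon(M)=2(2M-1-D)=30(M-1)$ together with the absorption of all $L$-terms into $m_3$ (vanishing when $L=0$) is exactly the ``only one constant is changed'' observation the paper makes.
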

As a consequence, reasoning in exactly the same way as in the
proof of Corollary 3.4 in \cite{Gor} we get (\ref{satbility eq})
where $\lambda \equiv \lambda_m$ for $m > m_3$. Summarizing the
proof let us just mention that having at hand Fact \ref{locally
compact fact} it is very easy to modify the reasoning from
\cite{Gor}. One should only keep in mind that this time
$D=14-13M$.

\end{proof}

\section{Final remarks}
Natural directions of further investigations and some open
problems arise from both of the above sections. First of all, as
we have already mentioned, it would be very interesting to see how
the result of Szankowski and Lindenstrauss follows from Theorem
\ref{tw coarse mazur ulam}. For instance, if we consider
$\e_T(t)=t+t^{\alpha}$ for $\alpha \in [0,1)$, by using Fact
\ref{lemma_on composition and integral} one can obtain that
$$\|T(\frac{a+b}{2}) - \frac{Ta+Tb}{2}\| =
O(\|a-b\|^{\frac{1}{2-\alpha}}) \textrm{ as $\|a-b\| \rightarrow
\infty$ }$$  which is sufficient to show that $Ix=\lim_{n
\rightarrow \infty}\frac{T(2^nx)}{2^n}$ is the required isometry
in Theorem \ref{LinSzan}.

Another interesting question concerns the expression
$\|T(\frac{a+b}{2}) - \frac{Ta+Tb}{2}\|$ and its optimal
estimation
 when $T$ is a $\f$-isometry
for $\f(t)=(1+\e)t$ and $\e \rightarrow 0$. We have already seen
that $\|T(\frac{a+b}{2}) - \frac{Ta+Tb}{2}\|=O(\e \|a-b\|)$ as $\e
\rightarrow 0$. It is very easy to show that this is everything
one can obtain in the general case. Indeed, as Vestfrid noticed in
\cite{Vest1}, consider $T: \Rz \mapsto \Rz$ defined as:
$$T(x)=\Bigg{\{}%
\begin{array}{cc}
  (1+\e)x \textrm{ if } x \geq 0 \\
  \frac{1}{1+\e}x \textrm{ if } x < 0.\\
\end{array}%
$$
However the exact value of the constant bellow remains unknown:

$$K=\limsup_{\e \rightarrow 0} K_{\e}$$ where $$K_{\e}=\sup \frac{\|T(\frac{a+b}{2}) - \frac{Ta+Tb}{2}\|}{\e\|a-b\|}.$$
Supremum is taken over all $T$ - $\f$-isometries between Banach
spaces, where $\f(t)=(1+\e)t$, and over all pairs of points $a
\neq b$ from the domain of $T$. The above example shows that $K
\geq 0.5$ and Corollary \ref{oszacowanie dla zgrubnie lip} shows
that $K \leq 3$. It is worth to notice that a simple analysis of
the proof of Corollary \ref{oszacowanie dla zgrubnie lip} gives us
that $\liminf_{\e \rightarrow 0} K_{\e} \leq e$.

Finally it is of a great interest to find the optimal constant
$M_0$ and the optimal estimation of $\delta$ in Theorem
\ref{Banach-Stone ogolnie}. In particular it is still unknown
whether the constant $M_0=\sqrt{2}$ is the optimal one or not.
However we skip the detailed discussion on this problem and we
direct the reader to the final section in \cite{Gor}.

\end{document}